\title{On a Simple Continuation for Partial Sums}
\author{Kamal Saleh}
\date{January 2024}
\DeclareSymbolFont{bbold}{U}{bbold}{m}{n}
\DeclareSymbolFontAlphabet{\mathbbold}{bbold}
\newtheorem{theorem}{Theorem}[section]
\newtheorem{lemma}[theorem]{Lemma}
\begin{document}

\maketitle

\begin{abstract}
    In 2014, Ibrahim M Alabdulmohsin wrote a paper called "Summability Calculus" where he developed a method to generalize sigma notation to non-integer upper bounds. His paper included a theorem, known as Theorem 6.1.1 (denoted here as Lemma 2.1 because of its simplicity and location in this paper), but doesn't study it much. Another paper by Mueller and Schleicher also analyzed this formula, but doesn't integrate or differentiate the formula and states some specific applications. This paper will analyze the simple formula that generalizes sigma notation to non-integer upper and lower bounds. We state and prove this formula in Section 2. Section 3 states a few algebraic properties for the sum and product formulae and shows how differentiation of sums and products works. Because integrating a product is challenging, we only analyze the integration of sums in the fourth part of Section 3. In Section 4 we apply the formula in the second section to create analytic continuations for functions defined as partial sums, formulate an infinite series representation to any limit, create a great approximation for functions that approach a certain limit, make an analytic continuation for products, and calculate the sum of anti-derivatives. We then conclude with a discussion of the material of this paper.
\end{abstract}

\section{Introduction}
One of the most important concepts in Mathematics is generalization. Whenever a new concept is introduced, mathematicians instantly try to generalize it. That way a theorem, rule, etc would have much more use. 

A method of generalization includes continuation, which is the expansion of a function's domain (i.e. generalizing it to more numbers). This was first studied by Bernhardt Riemann when he analyzed the Riemann Zeta function, which will be used in this paper. Euler (or Maclaurin) also made a formula that generalizes the computation of sums for non-integer upper and lower bounds (the original purpose of this paper), which is called the Euler-Maclaurin forumla. Alabdulmohsin wrote the paper "Summability Calculus" \cite{alabdulmohsin2012summability}, which states how Discrete and Continuous Calculus could be united under one field, devotes many of its sections to generalizing summation, specifically its second chapter.

However, many of the known explicit formulae for partial sums are very complicated, usually involving complicated sequences like the Bernoulli coefficients given in the Euler-Maclaurin formula. The primary focus of this paper is to study partial sums with a simple formula (which we'll denote as Lemma 2.1) that evaluates the partial sums of sums with non-integer upper and lower bounds. Note that Lemma 2.1 has been studied already by Alabdulmohsin\cite{alabdulmohsin2012summability} and Mueller and Schleicher\cite{mueller2007fractional}, but there are some properties and applications not covered in these papers as well as treating the lower bound as a constant or parameter rather than an independent variable.

Before applying Lemma 2.1, we first study its algebraic properties, which are done to verify if they satisfy the rules of Summability Calculus. Then, we cover differentiation of Lemma 2.1 not only with respect to the upper bound but the lower one as well, which was mostly ignored by previous literature. Because we now treat sums as a function of two variables, Partial differential equations are created from the derivatives of sums. Also, using the resulting derivatives, we can express sums as Taylor series.

This paper also touches on continuating products and their properties and differentiation as well, but the primary focus of this paper is on sums. So products don't go past the PDEs section of the paper.

Lemma 2.1 will also be integrated, again with respect to both the upper and lower bounds. A few examples are stated for them as well. After covering integration we enter the Applications section, where the original purpose of Lemma 2.1 is utilized to make a few Analytic Continuations that haven't been covered by Alabdulmohsin and Mueller and Schleicher for a sum that is studied (the generalized harmonic numbers) and another one not as famous (the alternating harmonic numbers) but is very interesting.

After using Lemma 2.1 for its original purpose, we apply the Euler-Maclaurin formula to come up with a function approximation in terms of its derivative. After this we extend Lemma 2.1 by applying the PDEs covered in the Properties Section to derive a formula for summing antiderivatives with respect to the sum of the original function. We conclude this section with two unrelated but interesting formulas for partial sums, and then conclude the entire paper with a discussion.

\section{The Simple Result}
\subsection{Base Formula}
We now state a simple continuation for partial sums. It has been proven more generally in Summability Calculus\cite{alabdulmohsin2012summability}, but we give a proof without its mechanics for those that are unfamiliar.

\begin{lemma} If $\lim_{k\rightarrow\infty}f(k)=L$ for some constant $L$ and function $f(x)$ defined on the positive integers, then 
$$\sum_{k=1}^xf(k)=xL+\sum_{k=1}^\infty(f(k)-f(k+x))$$\end{lemma}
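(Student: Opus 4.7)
The approach is a straightforward telescoping argument for positive integer $x$, the case in which both sides of the identity are literally well-defined given the hypotheses (since the lemma only assumes $f$ is defined on the positive integers, $f(k+x)$ requires $x$ to be an integer). Once established for integers, the identity then serves as the definition of the left-hand side for non-integer upper bounds, which is the whole point of the continuation.

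First I would write the right-hand series as the limit of its partial sums $S_N := \sum_{k=1}^{N}\bigl(f(k)-f(k+x)\bigr)$. Shifting the index in the second half via $j = k+x$ gives $\sum_{k=1}^{N} f(k+x) = \sum_{j=x+1}^{N+x} f(j)$, and for $N > x$ the overlapping block $\sum_{j=x+1}^{N} f(j)$ cancels between the two halves of $S_N$. What remains is
$$S_N \;=\; \sum_{k=1}^{x} f(k) \;-\; \sum_{j=N+1}^{N+x} f(j).$$
The residual sum contains exactly $x$ terms, each of which tends to $L$ as $N\to\infty$; hence $S_N \to \sum_{k=1}^{x} f(k) - xL$, and a rearrangement yields the stated formula.

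A pleasant feature of this argument is that convergence of the series on the right is automatic for integer $x$, handed to us by the telescoping itself rather than needing a separate check. The main subtlety — and what will matter in the applications — is that extending the identity to non-integer $x$ requires the right-hand series to converge independently, which in turn requires some regularity on $f$ beyond just $f(k)\to L$ (for instance, $f(k)-f(k+x)=O(k^{-1-\varepsilon})$, which holds whenever $f$ extends to a differentiable function on the positive reals whose derivative decays sufficiently fast). That analytic issue lies outside the lemma itself and, I expect, is handled case by case in the later sections when specific $f$ are plugged in.
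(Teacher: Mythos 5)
Your proof is correct and is essentially identical to the paper's: both define the partial sum $S_N=\sum_{k=1}^N(f(k)-f(k+x))$, shift the index to cancel the overlapping block, and observe that the residual block of $x$ terms tends to $xL$. Your closing remarks on the non-integer case likewise mirror the discussion the paper gives immediately after the lemma.
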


\begin{proof} We analyze the following partial sum: $$R(x,n)=\sum_{k=1}^n(f(k)-f(k+x))$$This could be rewritten as $$R(x,n)=\sum_{k=1}^nf(k)-\sum_{k=1}^nf(k+x)=\sum_{k=1}^nf(k)-\sum_{k=x+1}^{x+n}f(k)$$Let $n>x+1$ (the goal is to take the limit of $R$ as $n\rightarrow\infty$), then we can write that $$\sum_{k=1}^nf(k)-\sum_{k=x+1}^nf(k)-\sum_{k=n+1}^{n+x}f(k)=\sum_{k=1}^xf(k)-\sum_{k=n+1}^{n+x}f(k)$$As $n$ approaches infinity, the first sum remains the same, but the second one approaches $xL$, as there are $x$ functions in the sum, each approaching $L$. Therefore, we have that $$\sum_{k=1}^xf(k)=xL+\sum_{k=1}^\infty(f(k)-f(k+x))$$\end{proof}

The problem is, this only works when $x\in\mathbb{Z}$. We could prove the infinite series converges (and is monotonic) for any $x$ such that $h+x$ is in $f(x)$'s domain for some integer $h$ if $f(x)$ is monotonic by bounding it as follows: $$\sum_{k=0}^\infty(f(k)-f(\lfloor x\rfloor+k))\le\sum_{k=0}^\infty(f(k)-f(x+k))\le\sum_{k=0}^\infty(f(k)-f(\lceil x\rceil+k))$$
Assuming that it is monotonic decreasing. If $f(x)$ is monotonic increasing, switch the symbols.

There is a supposed contradiction. If we let $x=1$, then the infinite series would be $(f(0)-f(1))+(f(1)-f(2))+(f(2)-f(3))\pm...$. The terms seem to cancel if we write it as $f(0)+(-f(1)+f(1))+(-f(2)+f(2))+...$, getting that $f(0)=f(0)+L$ which implies that $L$ must be $0$. But these two sums don't need to be equivalent because of the Riemann Rearrangement theorem, which states that the associative and commutative properties don't need to hold for infinite series. Letting $x$ be any other integer will result in the same observation.

For Lemma 2.1 to work, $f(x)$ has to converge to a constant and its domain consists of all positive integers. If $x$ is not an integer, then it is sufficient for $f(x)$ to be monotonic. It is also sufficient if $f(x)=\cos(\pi x)g(x)$ where $g(x)$ is a monotonic function. This is because we can rewrite the finite sum as $$\sum_{k=1}^xf(k)=\sum_{k=0}^{\left\lfloor\frac x2\right\rfloor}g(2k)-\sum_{k=1}^{\left\lfloor\frac {x+1}2\right\rfloor}g(2k-1)$$ These aren't necessary conditions though, as a function like $\frac{\sin x}x$ which isn't monotonic and can't be written as $\cos(\pi x)$ times a monotonic function still has a convergent series.
\subsection{General Formula}
Lemma 2.1 can be generalized. Subtracting $\sum_{k=1}^{y-1}$ from $\sum_{k=1}^x$ we get $$\sum_{k=y}^xf(k)=L(x-y+1)+\sum_{k=1}^\infty(f(k+y-1)-f(k+x)$$

\subsection{Product Formula}

We could apply Lemma 2.1 to get a formula for products as well. So, as always, let $f(x)$ be a monotonic function that is greater than $0$. Then for all real numbers $x$ in the domain of $f$, we have the following: $$\prod_{k=1}^xf(k)=L^x\prod_{k=1}^\infty\frac{f(k)}{f(k+x)}$$.

Which can be proved like so:$$\prod_{k=1}^xf(k)=e^{\sum_{k=1}^x\log(f(k))}=e^{x\log L+\sum_{k=1}^\infty\log\left(\frac{f(k)}{f(k+x)}\right)}=L^x\prod_{k=1}^\infty\frac{f(k)}{f(k+x)}$$

The product doesn't work when $f(x)$ approaches a negative number because the infinite product won't exist or it becomes zero. The proof also implies that $L>0$, so this analytic continuation won't work when $L=0$. If $x=0$, the product equates to $1$.

Here is an example: Let $f(x)=\left(1+\frac1x\right)^x$, then, after solving for $e^x$: $$e^x=\frac{\prod_{k=1}^x\left(1+\frac1k\right)^k}{\prod_{k=1}^\infty\frac{\left(1+\frac1k\right)^k}{\left(1+\frac1{k+x}\right)^{k+x}}}$$When $x$ is an integer. As can be seen, solving for $L^x$ doesn't achieve much, but it can lead to interesting identities.

We can generalize the product as follows: $$\prod_{k=y}^xf(k)=L^{x-y+1}\prod_{k=1}^\infty\frac{f(k+y-1)}{f(k+x)}$$
\section{Properties}
\subsection{Fundamental Properties for the Sum and Product}
Following Lemma 2.1, we get two properties for the analytic continuation of sums and products, which are $\sum_{k=y}^{y-1}f(k)=0$ and $\prod_{k=y}^{y-1}f(k)=1$. Note here that the sum and product notations are no longer restricted to the integers. These properties were stated as the Empty-Sum and Empty-Product rules in the paper "Summability Calculus". The sum and product formulae follow two recurrence relations: $$\sum_{k=y}^xf(k)=f(y)+\sum_{k=y+1}^xf(k)=f(x)+\sum_{k=y}^{x-1}f(k)$$$$\prod_{k=y}^xf(k)=f(y)\prod_{k=y+1}^xf(k)=f(x)\prod_{k=y}^{x-1}f(k)$$Two more properties are defined below:$$\sum_{k=y}^xf(k)=\sum_{k=y}^cf(k)+\sum_{k=c+1}^xf(k)$$$$\prod_{k=y}^xf(k)=\prod_{k=y}^cf(k)\prod_{k=c+1}^xf(k)$$Where $c$ and $c+1$ don't necessarily need to be between $y$ and $x$. Creating these properties are straightforward after repeated application of one of the recurrence relations for the Sum/Product. These are similar to the first fundamental theorem of Calculus and is also fundamental to the properties of the many of the identities that were derived in Summability Calculus. The paper where this field of Math was created contains properties for finite sums\cite{alabdulmohsin2012summability} that also hold true by applying Lemma 2.1. A few have already been stated in the previous section, but there are two more that come from this book (re-written):$$\sum_{k=y}^xf(k)=-\sum_{k=x+1}^{y-1}f(k)$$$$\prod_{k=y}^xf(k)=\prod_{k=x+1}^{y-1}\frac1{f(k)}$$
The first one was explicitly stated in the book (all the properties that come from the book are stated in page 17) in a different form, which we could show by adding both sides by the RHS, applying the fundamental rule for sums, and then applying the Empty sum rule. The product rule can be derived similarly.
\subsection{Differentiation}
It would be interesting to find how differentiation and integration is defined upon these new generalized operators. 

\subsubsection{Upper Bound} We will consider differentiation with respect to the upper bound and then with respect to the lower. Differentiating both sides of Lemma 2.1, we get $$\frac{\partial}{\partial x}\sum_{k=y}^xf(k)=L-\sum_{k=1}^\infty f'(k+x)$$Where the infinite series can be proven to be convergent by the integral test.

Obtaining an expression for the $n$th derivative of this function is straightforward, so after doing so we could derive a Taylor series expansion for $\sum_{k=y}^xf(k)$ with respect to $x$ and around $y-1$. This results in $$\sum_{k=y}^xf(k)=L(x-y+1)-\sum_{k=1}^\infty\frac{(x-y+1)^k}{k!}\sum_{n=0}^\infty f^{(k)}(n+y)$$The reason we make it centered around $y-1$ is so the first term of the Taylor Series becomes zero by the empty sum rule. We could use this to easily prove that $$H_x=\sum_{k=2}^\infty(-1)^k\zeta(k)x^{k-1}$$which is a well known expansion for $H_x$ where $|x|<1$. Integrating this from $0$ to $1$, we get $$\int_0^1 H_xdx=\sum_{k=2}^\infty\frac{(-1)^k\zeta(k)}k$$ we will prove that this is equal to $\gamma$ in the section for Integration Sums. The generalized harmonic numbers satisfy $$H_x^{(m)}=\sum_{n=1}^\infty\frac{(-1)^{n+1}(m+n-1)_{m-1}\zeta(m+n)x^n}{n!}$$Integrating this expansion from $0$ to $1$: $$\int_0^1H_x^{(m)}dx=\sum_{n=2}^\infty\frac{(-1)^n(m+n-2)_{m-1}\zeta(m+n-1)}{n!}$$Which will also be in the Integrating Sums section. These two expansions are already known but can be derived quickly using the new derivative expression.

Computing the derivative for products is a bit more tricky. After repeatedly applying the product rule to one factor of the product at a time, we get that $$\frac{\partial}{\partial x}\prod_{k=y}^xf(k)=\left(\ln L-\sum_{k=1}^\infty\frac{f'(k+x)}{f(k+x)}\right)\prod_{k=y}^xf(k)$$Obtaining an $n$th derivative for this expression can be obtained by the Leibniz rule for differentiation, but it isn't feasible. Finding a Taylor Series for products is therefore difficult.

\subsubsection{Lower Bound}
Taking the derivative with respect to the lower bound is very similar. We have that $$\frac{\partial}{\partial y}\sum_{k=y}^xf(k)=-L+\sum_{k=0}^\infty f'(k+m)$$Note that we start at $k=0$. From this we obtain the Taylor series for $\sum_{k=y}^xf(k)$ with respect to $y$ and centered around $x+1$: $$\sum_{k=y}^xf(k)=L(y-x+1)+f(x)+\sum_{k=1}^\infty\frac{(y-x-1)^k}{k!}\sum_{n=0}^\infty f^{(k)}(n+x)$$ The derivative of the product with respect to the lower bound is $$\frac{\partial}{\partial y}\prod_{k=y}^xf(k)=\left(-\ln L+\sum_{k=0}^\infty\frac{f'(k+y)}{f(k+y)}\right)\prod_{k=y}^xf(k)$$
\subsubsection{PDEs}
Using the previous results, we can make four PDEs:$$\left(\frac{\partial}{\partial x}+\frac{\partial}{\partial y}\bigg|_{y=x+1}\right)\sum_{k=y}^xf(k)=\left(\frac{\partial}{\partial x}\bigg|_{x=y-1}+\frac{\partial}{\partial y}\right)\sum_{k=y}^xf(k)=0$$$$\left(\frac{\partial}{\partial x}+\frac{\partial}{\partial y}\bigg|_{y=x+1}\right)\prod_{k=y}^xf(k)=\left(\frac{\partial}{\partial x}\bigg|_{x=y-1}+\frac{\partial}{\partial y}\right)\prod_{k=y}^xf(k)=0$$Where $\left(\frac{\partial}{\partial x}+\frac{\partial}{\partial y}\bigg|_{y=a}\right)f(x,y)$ is interpreted as the partial derivative of $f(x,y)$ with respect to $x$ plus the partial derivative of $f(x,y)$ with respect to $y$, after which all the $y$'s are replaced with $a$. This operator isn't used much so it isn't clear whether these equations should be defined as PDEs. We can, however, make true PDEs using these results. Through Alabdulmohsin's result $$\frac{\partial}{\partial x}\sum_{k=y}^xf(k)=\sum_{k=y}^xf'(k)+\frac{\partial}{\partial x}\sum_{k=y}^xf(k)\bigg|_{x=y-1}$$Implying that $$\left(\frac{\partial}{\partial x}+\frac{\partial}{\partial y}\right)\sum_{k=y}^xf(k)=\sum_{k=y}^xf'(k)$$We also have that $$\frac{\partial^2}{\partial x\partial y}\sum_{k=y}^xf(k)=0$$This can be proven without the explicit definition of the sum (and thus for any $f$) by using the Fundamental Theorem of Summability Calculus and letting $c$ be a constant not dependent on $x$ or $y$.

We can also make two PDEs for Products, although they aren't as nice as that for sums: $$\left(\frac{\partial}{\partial x}+\frac{\partial}{\partial y}\right)\prod_{k=y}^xf(k)=\sum_{k=y}^x\frac{f'(k)}{f(k)}\prod_{k=y}^xf(k)$$We also have$$\prod_{k=y}^xf(k)\frac{\partial^2}{\partial x\partial y}\prod_{k=y}^xf(k)=\frac{\partial}{\partial x}\prod_{k=y}^xf(k)\frac{\partial}{\partial y}\prod_{k=y}^xf(k)$$
\subsection{Integrating Sums}
As a disclaimer, we will only be analyzing the integral of the sum, as calculating integrals for infinite products is not feasible. Integrating from a constant $a$ to $x$ with respect to the upper bound gives $$\int_a^x\sum_{k=y}^tf(k)dt$$$$=\frac{L((x-y+1)^2-(a-y+1)^2)}2+\sum_{k=1}^\infty\left(f(k+y-1)(x-a)+\int_{k+a}^{k+x}f(t)dt\right)$$Because $\sum_{k=y}^xf(k)$ is a continuous monotonic function, we are allowed to switch the integral and summation symbols. Note here that the infinite sum converges. 

The integral with respect to the lower bound is very similar: $$\int_a^y\sum_{k=t}^xf(k)dt$$$$=\frac{L((x-a+1)^2-(x-y+1)^2)}2-\sum_{k=0}^\infty\left(\int_{k+a}^{k+y}f(t)dt+f(k+x+1)(y-a)\right)$$

Two examples for integrating with respect to the upper bound are given below:$$\int_0^1H_xdx=\gamma$$$$\int_0^1H_x^{(m)}dx=\zeta(m)-m+1$$If we let $k=1$ in the second integral, we get that we could "assign" the value $\gamma$ to $\zeta(1)$, even though the zeta function has a pole here. This agrees with the Ramanujan summation of the harmonic series, which gives the value $\gamma$. It doesn't agree with the Ramanujan summation for $k=-1$ however, as the integrand no longer approaches zero and so the formula doesn't hold.

Using the integration of the Taylor series expansions of these analytic continuations in the Differentiation section, we get $$\sum_{k=2}^\infty\frac{(-1)^k\zeta(k)}k=\gamma$$$$\sum_{n=2}^\infty\frac{(-1)^n(k+n-2)_{k-1}\zeta(k+n-1)}{n!}=\zeta(k)-k+1$$It is important to note that the representation for $\gamma$ is already known (as well as the integral), while the representation for the zeta function hasn't been documented.
\section{Applications}
This section includes using Lemma 2.1 for analytic continuations for sums, creating approximations for functions, and summing integrals.

\subsection{Analytic Continuations}
The sum $\sum_{k=y}^xf(k)$ as traditionally defined exists only when $x,y\in\mathbb{Z}$ while $L(x-y+1)+\sum_{k=1}^\infty(f(k+y-1)-f(k+x))$ exists if the input is in the domain of $f(x)$. An example is $f(k)=\frac1k$, already found by Alabdulmohsin, getting the famous analytic continuation of the Harmonic numbers: $$H_x=\sum_{k=1}^\infty\frac x{k(k+x)}$$The Generalized Harmonic Numbers would be $$H_x^{(m)}=\zeta(m)-\zeta(m,x)+\frac1{x^m}$$ Where $\zeta(k,x)$ is the Hurwitz Zeta function defined as $\zeta(s,a)=\sum_{k=0}^\infty\frac1{(n+a)^s}$. Note here that $m>1$ is a must for this formula to work. These two analytic continuations were used to make the results proven in the Properties section.

Remember that $f(k)$ doesn't need to be monotonic for the infinite series in Lemma 2.1 to converge. For example, let $f(k)=\frac{(-1)^{k+1}}k$, then the Alternating Harmonic numbers would be $$\bar{H}_x=\sum_{k=1}^\infty\left(\frac{(-1)^{k+1}}k+\frac{(-1)^{k+x}}{k+x}\right)=\ln2+(-1)^x\sum_{k=1}^\infty\frac{(-1)^k}{k+x}$$ This infinite sum is equal to $\frac12\left(\psi\left(\frac{x+1}2\right)-\psi\left(\frac x2\right)\right)-\frac1x$, where $\psi$ is the Digamma function. This is proved by using the alternating symbol to get $$\sum_{k=0}^\infty\frac{(-1)^k}{k+x}=\frac12\sum_{k=0}^\infty\left(\frac1{k+\frac x2}-\frac1{k+\frac{1+x}2}\right)$$Then, we can add and subtract $\frac1{k+1}$ to the summand. Splitting the sums we get $\frac12\left(\psi\left(\frac{x+1}2\right)-\psi(\frac x2)\right)$. So: $$\bar{H}_x=\ln2+(-1)^x\left(\frac12\left(\psi\left(\frac{x+1}2\right)-\psi\left(\frac x2\right)\right)-\frac1x\right)$$to make this a true analytic continuation, we could replace $(-1)^x$ with $\cos(\pi x)$ because $(-1)^x=\cos(\pi x)$ when $x$ is an integer. Therefore: $$\bar{H}_x=\ln2+\cos(\pi x)\left(\frac12\left(\psi\left(\frac{x+1}2\right)-\psi\left(\frac x2\right)\right)-\frac1x\right)$$Before simplifying we had that $\bar{H}_0=0$ so we define the above formula as $\bar{H}_x$ where $x\ne0$ and $0$ if it is zero. When $x$ is a half integer, $\cos(\pi x)$ is zero, so $\bar{H}_x=\ln2$ whenever $x=k+\frac12$ for some integer $k$. This is equivalent to stating that the zeroes of $\bar{H}_x-\ln2$ are the half integers. Finding the zeroes of $\bar{H}_x$ is more difficult. By graphical observation, the following seems to hold:
\begin{theorem}
    Let $x_n$ denote the $n$th largest root of $\bar{H}_x$. Then $\lim_{n\rightarrow\infty}\{x_n\}=-\frac1{\pi}\arctan\left(\frac\pi{\ln2}\right)$, where $\{x\}$ is the fractional part of $x$.
\end{theorem}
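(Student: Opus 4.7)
The plan is to convert the equation $\bar{H}_x=0$ into an asymptotic cotangent equation whose roots can be located explicitly. First I would dispose of the positive side: as $x\to+\infty$, the asymptotic $\psi(z)\sim\log z$ gives $\psi((x+1)/2)-\psi(x/2)\sim 1/x$ and hence $\bar{H}_x\to\ln 2\ne 0$, so all but finitely many roots lie in $(-\infty,0)$. I set $x=-y$ with $y\to+\infty$.

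Next I would apply the reflection formula $\psi(1-z)-\psi(z)=\pi\cot(\pi z)$ to both $\psi(x/2)$ and $\psi((x+1)/2)$, then combine the resulting cotangent and tangent terms via $\tan\theta+\cot\theta=2/\sin(2\theta)$, obtaining
$$\psi\!\left(\tfrac{x+1}{2}\right)-\psi\!\left(\tfrac{x}{2}\right)=\psi\!\left(\tfrac{y+1}{2}\right)-\psi\!\left(\tfrac{y}{2}\right)-\frac{2}{y}-\frac{2\pi}{\sin(\pi y)}.$$
Substituting into $\bar{H}_x=0$ and using $\cos(\pi x)=\cos(\pi y)$ and $-1/x=1/y$, the equation collapses to
$$\pi\cot(\pi y)=\ln 2+E(y),\qquad E(y):=\frac{\cos(\pi y)}{2}\!\left[\psi\!\left(\tfrac{y+1}{2}\right)-\psi\!\left(\tfrac{y}{2}\right)\right],$$
where $|E(y)|=O(1/y)$ by the digamma asymptotic again.

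On each interval $(n,n+1)$ the function $\pi\cot(\pi y)-\ln 2-E(y)$ is continuous and, for all sufficiently large $n$, strictly decreasing: its derivative is $-\pi^2\csc^2(\pi y)-E'(y)$, and the $O(1/y)$ bound on $E'$ is dominated by $\pi^2$. Since $\pi\cot(\pi y)$ runs from $+\infty$ to $-\infty$ on $(n,n+1)$ while $E$ stays bounded, there is a unique root $y_n$ in each such interval. The leading-order equation $\pi\cot(\pi y)=\ln 2$ has the unique solution $y=n+c$ in $(n,n+1)$ where $c=\tfrac{1}{\pi}\arctan(\pi/\ln 2)\in(0,\tfrac{1}{2})$ (the bound $c<\tfrac{1}{2}$ follows from $\ln 2/\pi>0$). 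Linearising $\pi\cot$ at $n+c$, whose derivative equals $-(\pi^2+\ln^2 2)\ne 0$, and using $|E(y_n)|=O(1/n)$, I would conclude $y_n=n+c+O(1/n)$.

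Reindexing the roots in decreasing order and discarding the finite initial segment gives $x_n=-y_n=-(n+c)+O(1/n)$. Under the signed convention $\{x\}=x-\operatorname{round}(x)\in[-\tfrac{1}{2},\tfrac{1}{2})$ (applicable because $c<\tfrac{1}{2}$), this yields $\{x_n\}=-c+O(1/n)\to -\tfrac{1}{\pi}\arctan(\pi/\ln 2)$, completing the proof. The main obstacle is justifying uniqueness of the root in each interval near the endpoints where $\cot(\pi y)$ is singular; the monotonicity argument handles this, but one needs the $\pi^2$ lower bound on $|(\pi\cot(\pi y))'|$ to absorb the $O(1/y)$ perturbation from $E$ uniformly over $(n,n+1)$ for all large $n$.
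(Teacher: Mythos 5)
Your proof is correct and follows essentially the same route as the paper's: both hinge on the digamma reflection formula to reduce $\bar{H}_x=0$ on the negative real axis to $\pi\cot(\pi x)=-\ln 2$ plus an error that vanishes as $x\to-\infty$, locate one root per unit interval, and read off the fractional part of $x_n\sim -n-\frac1\pi\arctan(\pi/\ln 2)$. You supply the quantitative error control, the uniqueness-per-interval argument, and the exclusion of the positive axis that the paper only gestures at, and you are right to flag that the stated limit requires a signed fractional-part convention, a point the paper leaves implicit.
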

\begin{proof}
    We first make a reflection formula for $\bar{H}$:$$\bar{H}_x-\bar{H}_{2-x}=\pi\cot(\pi x)-\frac{x^2-2x+2}{x(x^2-3x+2)}\cos(\pi x)$$Which can be proven by replacing $\frac12\left(\psi\left(\frac{x+1}2\right)-\psi\left(\frac x2\right)\right)$ with $\psi(x)-\psi\left(\frac x2\right)-\ln2$, an identity proven by differentiating the logarithm of Gauss's duplication formula for the Gamma function. Adding $\ln2$ on both sides and taking $x\rightarrow-\infty$, we get that $\bar{H}_x\sim\pi\cot(\pi x)+\ln2$. Letting $x=x_n$ we have $0\sim\pi\cot(\pi x_n)+\ln2$. By the Intermediate Value Theorem, we can prove that there exists one zero of $\bar{H}_x$ between the negative integers, so $x_n\sim-n-\frac1{\pi}\arctan\left(\frac\pi{\ln2}\right)$. Taking the fractional part gives the answer.
\end{proof}
\subsection{Function Approximation}
A similar formula to Lemma 2.1 is known as the Euler-Maclaurin Summation formula. A special case of it is $$\sum_{k=y}^xf(k)=\int_y^xf(t)+\left(t-\lfloor t\rfloor-\frac12\right)f'(t)dt+\frac{f(x)+f(y)}2$$ Setting it equal to Lemma 2.1: $$L(x-y+1)+\sum_{k=1}^\infty(f(k+y-1)-f(k+x))=\int_y^xf(t)+\left(t-\lfloor t\rfloor-\frac12\right)f'(t)dt+\frac{f(x)+f(y)}2$$ Taking the derivative with respect to $x$ on both sides and solving for $f(x)$: $$f(x)=L+(\lfloor x\rfloor-x)f'(x)-\sum_{k=1}^\infty f'(k+x)$$which can be re-written as $$f(x)=(\lfloor x\rfloor-x)f'(x)+\frac{\partial}{\partial x}\sum_{k=y}^xf(k)$$If we instead took the derivative with respect to $y$ then $$f(y)=(\lfloor y\rfloor-y)f'(y)-\frac{\partial}{\partial y}\sum_{k=y}^xf(k)$$

This formula doesn't hold for all $n$ as the integrand we worked with was discontinuous, but what is interesting is that the two sides of this equation share similar asymptotic growth on the positive real line and when $f$ is monotonic. In fact, the integral of both sides are even more similar, if not equal, when the bounds are positive integers. There are also some non-monotonic functions that satisfy these properties as well. Some images are shown on the next page.

The last two graphs show that the approximation can also be accurate for the negative real line, but the first one shows that this is not always the case. The last function $\frac{\sin x}x$ isn't monotonic but the approximation still works out fine. This approximation can also be useful for $\sin x$ by multiplying the approximation of $\frac{\sin x}x$ by $x$.
\subsection{Summing Integrals}
A limitation to the power of the generalized Lemma 2.1 is that $f(x)$ must approach a constant $L$ as $x$ approaches infinity. But, using the PDEs defined in the Properties section, we can extend Lemma 2.1 to the integral of $f(x)$, the integral of the integral of $f(x)$, and so on. To start, we note that $$\left(\frac{\partial}{\partial x}+\frac{\partial}{\partial y}\right)\sum_{k=y}^xF(k)=\sum_{k=y}^xf(k)$$ 
\FloatBarrier
\begin{figure}
    \centering
    \includegraphics[width=7cm]{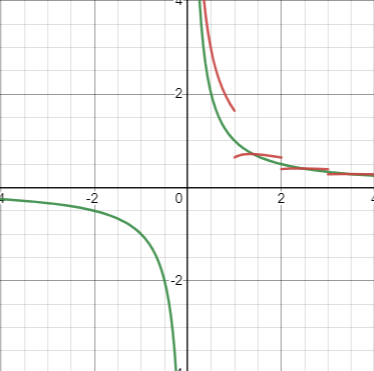}
    \caption{Approximation for $1/x$}
    \label{figure 1}
\end{figure}
\begin{figure}
    \centering
    \includegraphics[width=7cm]{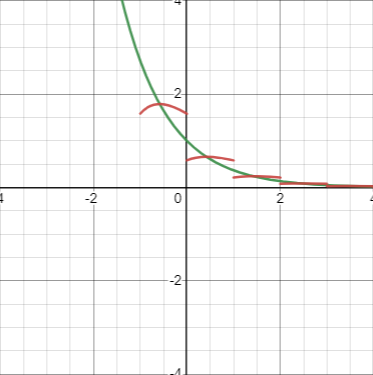}
    \caption{Approximation for $e^{-x}$}
    \label{figure 2}
\end{figure}
\begin{figure}
    \centering
    \includegraphics[width=7cm]{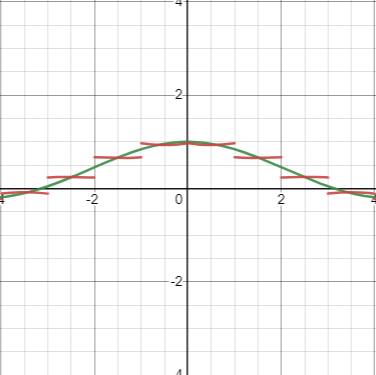}
    \caption{Approximation for $\frac{\sin x}x$}
    \label{figure 3}
\end{figure}
\FloatBarrier
\noindent Where $F(k)$ is some anti-derivative of $f(k)$. Using the fact that sums are solutions to the PDE $\frac{\partial^2}{\partial x\partial y}u=0$, we can differentiate with respect to $x$ on both sides to get $$\frac{\partial^2}{\partial x^2}\sum_{k=y}^xF(k)=\frac{\partial}{\partial x}\sum_{k=y}^xf(k)$$Replacing $x$ with $t$ then integrating with respect to $t$ once, then integrating again with respect to $t$ but with bounds being $y-1$ and $x$: $$\sum_{k=y}^xF(k)=\int_{y-1}^x\sum_{k=y}^tf(k)dt+c_1(y)(x-y+1)$$ Letting $x=y$, we get that $c_1(y)=F(y)-\int_{y-1}^y\sum_{k=y}^tf(k)dt$. Therefore $$\sum_{k=y}^xF(k)=\int_{y-1}^x\sum_{k=y}^tf(k)dt+\left(F(y)-\int_{y-1}^y\sum_{k=y}^tf(k)dt\right)(x-y+1)$$Denoted as $\star$. We can use this to find the sum of $k$. Letting $y=1$ for the sake of simplicity and $f(x)=1$, we get $$\sum_{k=1}^xk=\int_0^xtdt+\left(1-\int_0^1tdt\right)x=\frac{x^2+x}2$$The integration can be repeated, letting us compute:$$\sum_{k=1}^x\frac{k^2}2=\int_0^x\frac{t^2+t}2dt+\left(\frac12-\int_0^1\frac{t^2+t}2dt\right)x=\frac{x^3}6+\frac{x^2}4+\frac{x}{12}=\frac{x(x+1)(2x+1)}{12}$$ This integral formula can also be used to obtain the equivalence between the Digamma function and the harmonic numbers.$$\sum_{k=1}^x\frac{k^2}2=\int_0^x\frac{t^2+t}2dt+\left(\frac12-\int_0^1\frac{t^2+t}2dt\right)x=\frac{x^3}6+\frac{x^2}4+\frac{x}{12}=\frac{x(x+1)(2x+1)}{12}$$ This integral formula can also be used to obtain the equivalence between the Digamma function and the harmonic numbers. Letting $f(k)=\frac1k$ we get $$\sum_{k=1}^x\ln(k)=\ln(x!)=\int_0^xH_tdt-\gamma x$$Differentiating with respect to $x$ and adding by $\gamma$ we just get $\psi(x+1)+\gamma=H_x$. One more example is if we let $f(k)=\frac1{k^m}$:$$-(m-1)H_x^{(m-1)}=\int_0^xH_t^{(m)}dt-\zeta(m)x$$
There is still a limit to integrating the summand, as we can't sum functions like $f(k)=e^k$ and any function that if differentiated a given number of times, it would never reach a function that approaches a limit. We can, however, use Faulhaber's formula and the power series of such a function because $$\sum_{k=1}^xf(k)=\sum_{k=1}^x\sum_{i=0}^\infty c_ik^i=\sum_{i=0}^\infty c_i\sum_{k=1}^xk^i=\sum_{i=0}^\infty \frac{c_i}{i+1}\sum_{k=0}^i\binom{i+1}{k}B_kx^{i-k+1}$$Where $B_k$ are the Bernoulli numbers. And so we have another continuation for sums$$\sum_{k=y}^xf(k)=\sum_{i=0}^\infty \frac{c_i}{i+1}\sum_{k=0}^i\binom{i+1}{k}B_k(x^{i-k+1}-(y-1)^{i-k+1})$$If the power series was centered around $a\ne0$ then we have $$\sum_{k=y}^xf(k)$$$$=\sum_{j=0}^\infty c_j(-a)^j\left(\sum_{i=0}^j\frac{(-a)^{-i}}{i+1}\binom{j}{i}\sum_{k=0}^i\binom{i+1}{k}B_k(x^{i-k+1}-(y-1)^{i-k+1})\right)$$These formulae only work given that the infinite series converge. If there is a Taylor series for $f(x)$ then this would probably happen. They also aren't as easy to evaluate as Lemma 2.1 because of the binomial coefficients and Bernoulli numbers, but these new continuations don't require $f$ to converge to a constant.

Now again, starting with the PDE $$\left(\frac{\partial}{\partial x}+\frac{\partial}{\partial y}\right)\sum_{k=y}^xf(k)=\sum_{k=y}^xf'(k)$$And instead taking the derivative with respect to the lower bound, we get another formula for summing antiderivatives: $$\sum_{k=y}^xF(k)=\int_{x+1}^y\sum_{k=t}^xf(k)dt+\left(F(x)+\int_x^{x+1}\sum_{k=t}^xf(k)dt\right)(x-y+1)$$Which clearly implies that the LHS of $(\star)$ and this formula are both the same.

Notice that in the method of deriving these formulae, we let $x=y$ to be able to solve for $c_1(y)$ or $c_1(x)$, respectively. It's important to note that, because each one should depend on $y$ or $x$, we must replace $x$ with $y$ or $y$ with $x$ depending on the variable of the function. The reason why this is stated is because one could've derived $c_1(y)=F(x)-\int_{x-1}^x\sum_{k=y}^tf(k)dt$ without sensing a mistake.
\section{Discussion}
There is a restriction to Lemma 2.1 and its generalization, which needs $f(x)$ to approach a constant $L$ and be monotonic to have an analytic continuation. The formula for products is more restrictive as $L>0$. It can be proved that the infinite sum converges for specific $f$ but it would be better for the monotonocity requirement to be dropped to make the theorem more general. It is possible to replace this lemma with Theorem 6.1.1 in Summability Calculus as this isn't necessary, but the limit involved makes it difficult to manipulate its expression.

The two formulae given near the end of the "Summing Integrals" section are too complicated for practical use, and we need that $f$ has a power series that converges, which isn't always true. They also sort of defeat the purpose of this paper, but at least they can continuate almost any summand with a convergent power series. Hopefully, there will be future papers on these matters.

\printbibliography
\end{document}